\documentclass{amsart}
\usepackage{graphicx,amsfonts,amssymb,xcolor} 

\title[Non-solutions from random walks]{Non-solutions to mixed equations in acylindrically hyperbolic groups coming from random walks }
\author{Henry Bradford}
\address{Christ's College, University of Cambridge; St Andrew's Street; Cambridge CB2 3BU; ENGLAND}
    \email{hb470@cam.ac.uk}

\author{Alessandro Sisto}
\address{Department of Mathematics, Heriot-Watt University and Maxwell Institute for Mathematical Sciences, Edinburgh, UK}
    \email{a.sisto@hw.ac.uk}

\newtheorem{thm}{Theorem}[section]
\newtheorem{lem}[thm]{Lemma}

\newtheorem{coroll}[thm]{Corollary}
\newtheorem{defn}[thm]{Definition}

\newtheorem{rmrk}[thm]{Remark}
\newtheorem{qu}[thm]{Question}

\DeclareMathOperator{\id}{id}
\DeclareMathOperator{\PSL}{PSL}

\begin{document}

\begin{abstract}
A mixed equation in a group $G$ is given by a non-trivial element $w (x)$ of the free product $G \ast \mathbb{Z}$, and a solution is some $g\in G$ such that $w(g)$ is the identity. For $G$ acylindrically hyperbolic with trivial finite radical (e.g. torsion-free) we show that any mixed equation of length $n$ has a non-solution of length comparable to $\log(n)$, which is the best possible bound. Similarly, we show that there is a common non-solution of length $O(n)$ to all mixed equations of length $n$, again the best possible bound. In fact, in both cases we show that a random walk of appropriate length yields a non-solution with positive probability.
\end{abstract}

\maketitle

\section{Introduction}

A group is mixed-identity free (MIF) if for any non-trivial element element $w(x) \in G \ast \langle x \rangle$ in $G$ there exists $g\in G$ such that $w(g)$ is not the identity. 
Here and throughout, $x$ is a free variable, generating an infinite cyclic group. 

For a MIF group, it is natural and useful to quantify the length of the shortest possible non-solutions to mixed identities; this has been done for instance in \cite{Brad,BradSchnThom,GrovesWilton,AmGaElPa,Vigdo}, most recently motivated by questions in the study of operator algebras.

There are two ways to quantify the length of the shortest possible non-solutions to mixed identities, discussed in the following two subsections. In both cases we show that random walks yield non-solutions of optimal length.

\subsection{MIF growth} Let $G$ be MIF, generated by a finite set $S$. In this paper, given a word metric on a group $G$, we automatically fix a word metric on $G\ast\mathbb{Z}$ by augmenting our generating set for $G$ by our fixed generator $x$ for $\mathbb{Z}$. The \emph{complexity} of an element $w(x) \in G \ast \langle x \rangle$ in $G$ 
is the minimal word-length in $G$ of an element $g \in G$ satisfying $w(g) \neq e_G$. 
The \emph{MIF growth} function $\mathcal{M}_G ^S : \mathbb{N} \rightarrow \mathbb{N}$ 
of $G$ is defined such that $\mathcal{M}_G ^S (n)$ is the maximal complexity 
of an element of $G \ast \langle x \rangle$ of word length at most $n$ 
(with respect to $S \cup \lbrace x \rbrace$). 
For every finitely generated MIF group $G$, $\mathcal{M}_G$ grows at least logarithmically (see \cite{Brad} Theorem 1.9), and following \cite{BradSchnThom} we call $G$ \emph{sharply MIF} if $\mathcal{M}_G$ 
grows \emph{at most} logarithmically. 
Our first main result is as follows, and we note that the theorem is new even for non-elementary hyperbolic groups.

\begin{thm} \label{MainThmIntro}
Every finitely generated acylindrically hyperbolic group with trivial finite radical is sharply MIF. 
\end{thm}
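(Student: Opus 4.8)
The plan is to show that for any non-trivial $w(x) \in G \ast \langle x \rangle$ of length $n$, a random walk on $G$ of length $O(\log n)$ produces a non-solution with positive probability. Write $w(x) = g_0 x^{\epsilon_1} g_1 x^{\epsilon_2} \cdots g_{k-1} x^{\epsilon_k} g_k$ in normal form, where the $g_i \in G$ are the syllables; note $k \leq n$ and each $|g_i| \leq n$. Since $G$ is acylindrically hyperbolic with trivial finite radical, we may fix a non-elementary acylindrical action of $G$ on a hyperbolic space $X$ (equivalently, pass to the geometry of the Cayley graph with respect to a hyperbolically embedded virtually-free subgroup, or use a WPD element). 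The heuristic is that a ``generic'' long element $g$ of $G$ is a strongly contracting / loxodromic element whose axis is in general position relative to the finitely many $g_i$, so that when we substitute $x = g$ and write out $w(g)$ as a product, there can be essentially no cancellation between the $g^{\pm 1}$-blocks and the $g_i$'s, forcing $w(g) \neq e_G$.

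The key steps, in order: (1) Use the (by now standard) results on random walks on acylindrically hyperbolic groups — the sample path tracks a geodesic in $X$ with a linear progress / exponential-decay deviation estimate — to show that after $m = C \log n$ steps, the random element $g = g_m$ is loxodromic with translation length $\asymp m$ on $X$, and moreover its axis $\gamma$ has the property that for \emph{each} fixed $i$, the translates $\gamma$ and $g_i \gamma$ fellow-travel for only a bounded (independent of $n$) length, i.e.\ the ``overlap'' is $\leq D$; each such event fails with probability at most $e^{-c m}$, so by a union bound over the $\leq n$ syllables it holds simultaneously with probability $\geq 1 - n e^{-cm}$, which is positive once $C$ is large enough. (2) Given such a ``good'' $g$, estimate from below the distance $d_X(o, w(g) o)$ for the basepoint $o$: write the path $o, g_0 o, g_0 g^{\epsilon_1} o, \ldots$ and apply a local-to-global / concatenation-of-quasigeodesics argument (a Behrstock-type inequality or the standard ``paths with long contracting segments and short backtracking are quasigeodesics'' lemma) to conclude $d_X(o, w(g)o) \geq (\text{linear in } m\cdot k) - (\text{error linear in }\sum|g_i| \text{ but with small coefficient})$. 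Since the per-syllable loxodromic contribution $\asymp m = C\log n$ can be made to dominate the per-syllable error once $C$ is large, we get $d_X(o, w(g)o) > 0$, hence $w(g) \neq e_G$. (3) Conclude $\mathcal{M}_G^S(n) = O(\log n)$, which combined with the lower bound from \cite{Brad} Theorem 1.9 gives sharply MIF.

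The main obstacle I expect is Step (2): controlling cancellation uniformly when the syllables $g_i$ are themselves long (length up to $n$) while the random ``spacers'' $g^{\pm 1}$ have length only $O(\log n)$. One cannot simply appeal to ping-pong with a fixed free subgroup, since the $g_i$ may move the basepoint very far and their geometry relative to $\gamma$ is a priori uncontrolled except for the bounded-overlap condition from Step (1). The right framework is probably to work in the hyperbolic space $X$ and use that a bi-infinite concatenation of the translated axes $\gamma, g_i\gamma, \ldots$ is an unparametrized quasigeodesic provided consecutive pieces have bounded overlap and each piece is long enough — here the axis pieces have length $\asymp m$, which we make large by choosing $C$ large, so the quasigeodesic constants can be taken absolute and the total displacement is bounded below by a definite fraction of the number of pieces times $m$, in particular positive. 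A secondary technical point is handling the case $k$ small (few or no occurrences of $x$, e.g.\ $w(x) = g_0 x g_0^{-1}\cdot(\text{something})$ or $w$ with a single syllable): these are either trivially handled because $w(x)$ already has a literal $x$ whose image under a generic loxodromic cannot be swallowed, or reduce to the observation that $w(e_G)$-type degeneracies are excluded by $w$ being a non-trivial element of the free product together with triviality of the finite radical.
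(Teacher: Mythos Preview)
Your approach is essentially the paper's: Theorem~\ref{MainThmIntro} is deduced from the random-walk statement (Theorem~\ref{thm:random}), which is proved via exponential-decay overlap estimates, a union bound over the constants $g_i$, and a concatenation lemma in the hyperbolic space $X$.

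Two places where the paper sharpens your outline. First, the union bound is applied not to axis-overlap events but to the events $\mathcal{D}(g_i, x_m) \leq \lambda m/10$, i.e.\ overlap of the random geodesic $[e,x_m]^X$ with each constant geodesic $[e,g_i]^X$; this is Lemma~\ref{lem:gr_prod}, with decay $C_1 2^{-t/C_1}$ uniform in $g$. The axis-matching control (your $\gamma$ versus $g_i\gamma$) is a separate input, Lemma~\ref{lem:match}, which already holds for \emph{all} $g$ simultaneously, so no union bound is needed there. Second --- and this dissolves your stated main obstacle --- the concatenation lemma used (Lemma~\ref{lem:concat}) imposes a length-versus-overlap condition only on the \emph{even} segments (the $x_m^{\pm1}$ translates); the odd segments $[e,g_i]^X$ may be arbitrarily long. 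So there is no error term linear in $\sum|g_i|$ at all: once the four overlap quantities for each even segment are bounded by $\lambda m/10$ and $d_X(e,x_m)>\lambda m$, the bi-infinite concatenation along powers of $w(x_m)$ is a quasi-geodesic and $w(x_m)$ has infinite order.
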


A group is \emph{acylindrically hyperbolic} if it admits a nonelementary  acylindrical action on a hyperbolic metric space; this is a huge class of groups and we refer the reader to \cite{Osin:ICM} for a survey. 
The fact that acylindrically hyperbolic group with trivial finite radical are MIF is due 
to Hull and Osin (\cite{HullOsin} Corollary 1.7). 

The key novelty of this paper is the use of random walks to find non-solutions to mixed identities, and indeed Theorem \ref{MainThmIntro} is an immediate consequence of the next result about random walks. 

We call a measure on an acylindrically hyperbolic group \emph{admissible} if it is finitely supported and the semigroup generated by the support is the whole of $G$.

\begin{thm}
\label{thm:random}
    Let $G$ be a finitely generated acylindrically hyperbolic group with trivial finite radical, and let $(x_n)$ be a random walk driven by an admissible measure. Then there exists a constant $C>0$ such that for any $n>1$ and non-trivial $w(x) \in G \ast \langle x \rangle$ of word length at most $n$ we have
    $$\mathbb P[w(x_{\lceil C\log n\rceil})\neq e_G]>0.$$  
\end{thm}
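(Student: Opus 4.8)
The plan is to reduce to an estimate on where the random walk can land, and then combine a structural description of the solution set of $w$ with the deviation inequalities for random walks on acylindrically hyperbolic groups. Put $w$ in reduced normal form $w(x)=g_0x^{m_1}g_1x^{m_2}\cdots x^{m_k}g_k$ in $G\ast\langle x\rangle$, so that $m_i\neq 0$, $g_i\neq e_G$ for $0<i<k$, and $k$, $\sum_i|m_i|$, $\sum_i\|g_i\|_S$ are all $\leq n$. As $G$ is acylindrically hyperbolic with trivial finite radical, fix a proper infinite hyperbolically embedded subgroup $H\hookrightarrow_h(G,S)$ and let $G$ act on the hyperbolic Cayley graph $Y=\mathrm{Cay}(G,S\sqcup H)$; this action is non-elementary, acylindrical, and free on the vertex set, so that to prove $w(g)\neq e_G$ it is enough to show that the isometry $w(g)$ displaces the vertex $e_G$. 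Put $\ell=\lceil C\log n\rceil$. It suffices to show $\mathbb{P}[x_\ell\in B_w]<1$, where $B_w=\{h\in G:w(h)=e_G\}$.

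The heart of the matter is the structural claim that there are a constant $\tau_0=\tau_0(G,H)$ and a family $T_1,\dots,T_N$ of at most $N=O(n^2)$ ``thin'' subsets of $G$ such that any $g$ which is loxodromic on $Y$ with $\tau_Y(g)\geq\tau_0$ and lies in none of the $T_j$ satisfies $w(g)\neq e_G$. To see this, substitute such a $g$ and read the product $w(g)=g_0g^{m_1}g_1\cdots g^{m_k}g_k$ as a path in $Y$ from $e_G$: it concatenates long quasigeodesic sub-arcs running along $G$-translates of the axis of $g$ (the pieces $g^{m_i}$, of length $\asymp |m_i|\,\tau_Y(g)$) with ``bridges'' of bounded complexity built from the $g_i$. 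Here acylindrical hyperbolicity intervenes: distinct $G$-translates of the axis of a loxodromic fellow-travel for a uniformly bounded length, so a bridge produces only a bounded amount $R_0=R_0(G,H)$ of cancellation between consecutive axis-arcs \emph{unless} the relevant product of the $g_i$'s lies in a bounded neighbourhood of the maximal elementary subgroup $E(g)$ --- note that every $g^{m_i}\in E(g)$, so only the $g_i$'s in the accumulated prefixes matter here. Choosing $\tau_0>2R_0$: if no such coincidence occurs, every axis-arc survives up to bounded loss at both ends, the path makes net progress $\gtrsim\tau_Y(g)>0$, and $w(g)\cdot e_G\neq e_G$. Each of the $O(k^2)$ possible coincidences confines $g$ to a coset of a centraliser $C_G(h)$ or elementary subgroup $E(h)$ with $h$ a product of boundedly many of the $g_i$'s; using $\mathrm{Fin}(G)=\{1\}$, such subgroups are ``thin'' --- virtually cyclic when $h$ is loxodromic, hyperbolically embedded otherwise --- and one takes the $T_j$ to be (suitable bounded neighbourhoods of) these cosets.

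Now we invoke the random-walk theory. For an admissible $\mu$ there are $C_0\geq1$, $c_0>0$ depending only on $(G,H,\mu)$ with (a) $\mathbb{P}\big[x_\ell\text{ not loxodromic on }Y,\text{ or }\tau_Y(x_\ell)<\tau_0\big]\leq C_0e^{-c_0\ell}$, by linear progress and the deviation inequalities for random walks in acylindrically hyperbolic groups; and (b) $\mathbb{P}[x_\ell\in gL]\leq C_0e^{-c_0\ell}$ uniformly over cosets of the (uniformly thin) subgroups $L$ arising above, by the corresponding escape estimate for thin sets, the constants being uniform thanks to the acylindricity bounds on elementary subgroups. Combining with the structural claim,
\[ \mathbb{P}[x_\ell\in B_w]\ \leq\ \mathbb{P}\big[x_\ell\text{ not loxodromic, or }\tau_Y(x_\ell)<\tau_0\big]+\sum_{j=1}^{N}\mathbb{P}[x_\ell\in T_j]\ \leq\ (N+1)\,C_0\,e^{-c_0\ell}=O(n^2)\,e^{-c_0\ell}, \]
which is $<1$ provided $\ell\geq\lceil C\log n\rceil$ for $C=C(G,H,\mu)$ large enough --- exactly the assertion of the theorem.

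The step I expect to be the real obstacle is the structural claim. When $w$ has many $x$-syllables the coefficients $g_i$ can be much longer than $g=x_\ell$ (which has word length $\asymp\log n$), so the naive ``no free cancellation'' picture is unavailable; one must instead show --- via the fine geometry of loxodromics in acylindrically hyperbolic groups (uniform overlap/contraction of axes, the structure of maximal elementary subgroups, and the hypothesis $\mathrm{Fin}(G)=\{1\}$, which is already what makes $G$ MIF by Hull--Osin) --- that \emph{all} cancellation in $w(g)$ is funnelled through a number of elementary-subgroup coincidences bounded polynomially in $k$, each carving out a single thin set. (That the resulting $\log n$ bound cannot be improved is \cite{Brad} Theorem 1.9.)
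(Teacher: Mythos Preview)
Your proposal routes everything through a structural claim about the solution set $B_w$ --- that outside $O(n^2)$ ``thin'' cosets of elementary subgroups or centralisers, any sufficiently loxodromic $g$ satisfies $w(g)\neq e_G$ --- and you correctly flag this as the obstacle. It is a genuine gap. The passage from ``some prefix product lies near $E(g)$'' to ``$g$ lies in a coset of $C_G(h)$ or $E(h)$ for a fixed $h$ built from the $g_i$'' is not justified: the condition involves products interleaving the $g_i$ with powers of $g$, which do not commute, so you cannot simply strip out the $g^{m_i}$ factors; and when the resulting $h$ is elliptic (which nothing prevents), $C_G(h)$ need not be thin in any sense giving a uniform exponential escape estimate. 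The count $O(k^2)$ for the number of coincidences is also unsupported once cancellation is allowed to propagate across several syllables.

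The paper avoids this entirely by never attempting to describe $B_w$. Instead it uses the randomness of $x_m$ \emph{throughout}, not just at the final union bound. Two ingredients replace your structural claim: first, for each fixed constant $g_i$ the Gromov product $\mathcal D(g_i,x_m)$ is exponentially small in $m$ regardless of how long $g_i$ is (Lemma~\ref{lem:gr_prod}), so a union bound over the $\leq 2n$ constants costs only a factor of $n$ --- this is exactly what dissolves your worry that the $g_i$ are much longer than $x_m$; second, with high probability the random geodesic $[e,x_m]$ has small overlap with \emph{every} translate $g[e,x_m]$ simultaneously (Lemma~\ref{lem:match}, which is where the trivial-finite-radical hypothesis enters). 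With these four bullet-point events holding, a concatenation lemma (Lemma~\ref{lem:concat}) certifies that the bi-infinite path spelling out powers of $w(x_m)$ is a quasi-geodesic, so $w(x_m)$ has infinite order. No analysis of how cancellation might propagate through $w$ is needed.
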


Our Theorem \ref{MainThmIntro} is a wide generalization of previous work of the 
first author, Schneider and Thom, who showed that finitely generated torsion-free nonelementary Kleinian group are sharply MIF \cite[Theorem 1.2]{BradSchnThom}. The proof was by completely different methods from our work here, relying as it did on expansion properties of Cayley graphs for finite simple groups, 
and some elementary algebraic geometry. 
The methods used by Hull and Osin in \cite{HullOsin} are different again. 


\subsection{Simultaneous non-solutions}
Our methods also allow us to show that outcomes of sufficiently long random walks 
are, with high probability, \emph{simultaneously} non-solutions 
to large families of mixed identities. The study of lengths of non-solutions to families of mixed identities originates in \cite{AmGaElPa} and was continued in \cite{Vigdo}, motivated by problems in operator algebras.

Given a word metric on the group $G$ and $n \geq 1$, 
let $W_n \subseteq G\ast\mathbb{Z}$ be the set of 
all nontrivial elements of $G\ast\mathbb{Z}$, all of whose 
constants have word length at most $n$ in $G$ in $G\ast\mathbb{Z}$. 

\begin{thm}
\label{thm:linear_MIF}
Let $G$ be a finitely generated acylindrically hyperbolic group 
with trivial finite radical, and fix a word metric on $G$. Also, let $(x_n)$ be a random walk driven by an admissible measure.
Then there exists an integer $C\geq 1$ such that 
for all integers $n\geq 1$, with positive probability we have $w(x_{Cn})\neq e_G$ for all $w(x)\in W_n$. 
In particular $w(x_{Cn})\neq e_G$ for all non-trivial $w(x)\in G\ast\mathbb{Z}$ of word length at most $n$.
\end{thm}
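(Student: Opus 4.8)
The plan is to bootstrap from the logarithmic statement (Theorem \ref{thm:random}) by controlling, simultaneously over all $w \in W_n$, the probability that a random walk of length $Cn$ \emph{fails} to be a non-solution. First I would recall the standard fact that for an admissible measure on an acylindrically hyperbolic group with trivial finite radical, the random walk escapes to infinity along a loxodromic direction with exponentially decaying failure probabilities: there is a Gromov-hyperbolic space $X$ on which $G$ acts acylindrically, and constants $L > 0$, $\rho \in (0,1)$ such that $\mathbb{P}[d_X(o, x_k o) < Lk] \leq \rho^k$, and moreover that the sample path $(x_k o)$ tracks a geodesic ray with exponentially good Morse/contraction estimates. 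The key structural input is the "WPD-to-mixed-identity" mechanism already implicit in Theorem \ref{thm:random}: once $x_k$ is a sufficiently long loxodromic-like element (equivalently, $x_k o$ lies far out and in a "generic" direction relative to the finitely many constants appearing in $w$), substituting $x_k$ into a reduced word $w = g_0 x^{\epsilon_1} g_1 x^{\epsilon_2} \cdots g_{m-1} x^{\epsilon_m} g_m$ produces an element of $X$-displacement bounded below by roughly (number of syllables)$\times$(displacement of $x_k$) minus a bounded correction per constant, hence $w(x_k) \neq e_G$. The point is to make the required "genericity" only depend on the constants $g_i$ through the bound $n$ on their word length, and on $k$ through $k \gtrsim \log(\text{something polynomial in } n)$ — but here $W_n$ has size exponential in $n$ (arbitrary syllable length), so a naive union bound over $W_n$ costs a factor $e^{O(n)}$, which is exactly why we need $k$ linear in $n$ rather than logarithmic.

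The main steps, in order: (1) Fix the acylindrical action $G \curvearrowright X$, basepoint $o$, and derive the exponential-decay deviation inequality for the random walk, together with a "geodesic tracking with exponential error" statement — concretely, an event $\mathcal{E}_k$ of probability $\geq 1 - \rho^k$ on which $x_k$ is loxodromic with translation length $\geq Lk$ on $X$ and the axis of $x_k$ has bounded "backtracking" near $o$. (2) Prove a deterministic lemma: there is a constant $A$ (depending only on the acylindricity constants of the action and on $\|S\|$) such that if $h \in G$ satisfies $d_X(o, h o) \geq A(\,$\text{word length of the largest constant} + 1$)$ and $h$ acts loxodromically with a geodesic-like axis through a neighbourhood of $o$, then for \emph{every} nontrivial reduced $w(x) \in G \ast \mathbb{Z}$ with all constants of word length $\leq n$, we have $w(h) \neq e_G$ — here the quantifier over $w$ is free because the constants only enter through the uniform displacement bound $\leq \|S\| \cdot n$, and the syllables each contribute a definite positive amount of displacement (a telescoping/ping-pong estimate in $X$ using that $h$ translates along a quasigeodesic axis). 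This is the heart of the matter and reuses the mechanism behind Theorem \ref{thm:random}, just keeping track of the linear-in-$n$ threshold on $d_X(o,ho)$. (3) Combine: choose $C$ so that $CLn \geq A(\|S\| n + 1)$, i.e. $C$ depends only on $L, A, \|S\|$; then on the event $\mathcal{E}_{Cn}$ — which has probability $\geq 1 - \rho^{Cn} > 0$ — the element $x_{Cn}$ satisfies the hypothesis of step (2), hence $w(x_{Cn}) \neq e_G$ \emph{simultaneously} for all $w \in W_n$. The final sentence of the theorem is immediate since a word of length $\leq n$ in $G \ast \mathbb{Z}$ has all its $G$-constants of length $\leq n$.

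The main obstacle I expect is step (2): making the displacement lower bound for $w(h)$ genuinely uniform over the infinite set $W_n$, in particular over words with arbitrarily many syllables and with constants that could partially cancel the axis-translation of $h$. The subtlety is that a constant $g_i$ of word length up to $n$ can map the axis of $h$ a bounded-but-proportional-to-$n$ distance away, and one must ensure the per-syllable gain $Lk$ strictly dominates this, \emph{and} that the quasigeodesic constants of the concatenated path $o, h^{\epsilon_1}o, h^{\epsilon_1}g_1 h^{\epsilon_2} o, \ldots$ do not degrade as the number of syllables grows — this is where acylindricity (uniform WPD-type control on how much two far-apart translates of the axis can fellow-travel) is essential, and where one should invoke the contraction/Morse property of loxodromic axes under acylindrical actions to get constants independent of syllable count. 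A secondary technical point is ensuring the geodesic-tracking event $\mathcal{E}_k$ really does place the axis of $x_k$ (not just the orbit point $x_k o$) in good position relative to $o$; this follows from the standard fact that for random walks the matching geodesic and the axis of the (eventually loxodromic) group element fellow-travel with exponentially good probability, but it must be cited or proved carefully.
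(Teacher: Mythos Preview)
Your plan has the right overall architecture---random walk of linear length, exponential tail bounds, a concatenation/ping-pong argument---but the deterministic lemma you propose in step~(2) is false as stated, and this is the heart of the matter.

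Concretely: fix any loxodromic $h_0 \in G$, set $n = |h_0|_S$, and take $h = h_0^N$ for $N$ large. Then $h$ is loxodromic with translation length $N\tau(h_0)$, exceeding $A(n+1)$ for any prescribed $A$ once $N$ is large; its axis coincides with that of $h_0$ and so passes near $o$. Yet $w(x) = [x,h_0] \in W_n$ satisfies $w(h) = [h_0^N, h_0] = e_G$. So no hypothesis on $h$ of the form ``large displacement plus a well-placed axis'' can suffice; you need $h$ to be generic in the stronger sense that \emph{no} nontrivial $g$ coarsely preserves the direction $[e,h]^X$. Acylindricity bounds the \emph{number} of such $g$ (this is what you invoke in your obstacle paragraph) but does not make it zero, and the Morse/contraction property of axes says nothing about this either.

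The paper supplies exactly this missing genericity via two separate probabilistic inputs beyond linear progress. First, Lemma~\ref{lem:gr_prod} gives exponential decay $\mathbb{P}[\mathcal{D}(g,x_m)\geq t]\leq C_1 2^{-t/C_1}$ uniformly in $g$; a union bound over the $\leq s^n$ elements of the ball of radius $n$ in $G$ (not over $W_n$, which is infinite---your remark that $|W_n|$ is exponential in $n$ is incorrect, since the syllable count is unbounded) then forces $m$ to be linear in $n$ and controls the Gromov product of $x_m$ with every possible constant simultaneously. Second, and this is the piece your plan lacks, Lemma~\ref{lem:match} says that with probability tending to $1$ one has $\mathcal{D}(x_m^{\pm 1}, g x_m) \leq \epsilon m$ for \emph{all} $g$ (respectively all $g\neq e$); this rules out any element sending the random direction close to itself or its reverse, which is precisely what fails for $h_0^N$. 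With these events in hand the concatenation lemma (Lemma~\ref{lem:concat}) applies uniformly to every $w\in W_n$, with no further union bound over syllable patterns needed.
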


Being sharply MIF implies that there exist simultaneous non-solutions of length $O(n)$ to all mixed equations of length $n$; this can be shown using iterated commutators somewhat similarly to \cite[Proposition 5.3]{HullOsin}. We thank Srivatsav Kunnawalkam Elayavalli for this observation. Using Theorem \ref{thm:linear_MIF} we get even a little bit more:

\begin{coroll}
\label{coroll:linear_MIF}
Under the conditions of Theorem \ref{thm:linear_MIF}, 
for all integers $n\geq 1$, 
there exists $g_n \in G$ with $\lvert g_n \rvert = O(n)$ 
such that $w(g_n)\neq e_G$ for all $w(x)\in W_n$. 
\end{coroll}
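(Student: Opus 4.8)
The plan is to derandomize Theorem \ref{thm:linear_MIF} by a direct counting argument. Fix $n\geq 1$ and let $C$ be the constant from Theorem \ref{thm:linear_MIF}, so that a random walk $(x_m)$ driven by the admissible measure $\mu$ satisfies $\mathbb{P}[\,w(x_{Cn})\neq e_G \text{ for all } w(x)\in W_n\,]>0$. The random variable $x_{Cn}$ is supported on the set of elements of $G$ expressible as a product of $Cn$ elements of $\mathrm{supp}(\mu)$; since $\mu$ is finitely supported, every such element has word length at most $Cn\cdot\max_{s\in\mathrm{supp}(\mu)}\lvert s\rvert =: C'n$ with respect to our fixed generating set. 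Because the event above has positive probability, there must exist at least one element $g_n$ in the support of $x_{Cn}$ for which the event holds, i.e. $w(g_n)\neq e_G$ for all $w(x)\in W_n$; and by the above $\lvert g_n\rvert\leq C'n = O(n)$.

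The only subtlety is making sure the constant $C'$ does not depend on $n$, which is immediate here: $C$ is independent of $n$ by Theorem \ref{thm:linear_MIF}, and $\max_{s\in\mathrm{supp}(\mu)}\lvert s\rvert$ is a fixed finite number depending only on $\mu$ and the generating set, since admissible measures are finitely supported. Hence $\lvert g_n\rvert = O(n)$ with an implied constant depending only on $G$, $S$, and $\mu$, as claimed.

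I do not anticipate a real obstacle: the corollary is a routine consequence of Theorem \ref{thm:linear_MIF} via the elementary observation that a positive-probability event must be realized by some outcome in the (finite) support of the relevant random variable, combined with the deterministic bound on the word length of elements reachable by a walk of length $Cn$. The one place to be slightly careful is that $x_{Cn}$ takes values in a finite subset of $G$ (which it does, as $\mu$ has finite support and $Cn$ is finite), so that "positive probability" genuinely produces a concrete witness $g_n$.
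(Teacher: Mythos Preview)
Your proof is correct and is exactly the intended derivation: the paper does not give an explicit proof of the corollary, treating it as immediate from Theorem~\ref{thm:linear_MIF}, and your argument spells out precisely the obvious step---that a positive-probability event for the finitely supported random variable $x_{Cn}$ is witnessed by some element of word length at most $Cn\cdot\max_{s\in\mathrm{supp}(\mu)}\lvert s\rvert$.
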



 In \cite{AmGaElPa} the authors obtain an estimate of $O(n^{12})$ rather than $O(n)$ for a simultaneous non-solution to all mixed equations of length at most $n$.
The conclusion of Corollary \ref{coroll:linear_MIF} (with an estimate $O(n)$) 
is also known to hold for 
cocompact lattices in $\PSL_d$ over local fields for $d \geq 2$ (Theorem 1.2 of \cite{Vigdo}). This will be extended to all linear groups in \cite{AG:effective}.
The estimate $O(n)$, for any MIF group, is easily seen to be optimal, 
by considering the commutator words $[x,g]$, for $\lvert g \rvert \leq n$. 
We also note that the estimates in Theorem \ref{MainThmIntro} and 
Corollary \ref{coroll:linear_MIF} would remain unchanged if we considered 
instead word equations in $G \ast \mathbb{F}$, for any free group $\mathbb{F}$ of finite rank $r \geq 1$ (with any word metric on non-solutions in $G^r$), 
by standard embeddings between $G \ast \mathbb{F}$ and $G \ast \mathbb{Z}$. 

Theorem \ref{thm:linear_MIF} is relevant to another effective version of MIF, again introduced in \cite{AmGaElPa}, and called selfless-ness. We recall this notion and make some remarks about this in Section \ref{sec:self}.

\subsection*{Acknowledgments} We would like to thanks Srivatsav Kunnawalkam Elayavalli, Francesco Fournier Facio, and Tsachik Gelander for useful discussions, and Giorgio Mangioni for pointing out the reference \cite{MT:Cremona}.


\section{Proofs}

We will use a concatenation lemma to certify non-triviality of certain elements of groups acting on hyperbolic spaces. There are several variations on this in the literature, the one that works for us is the following, from \cite{HS:separable}. Roughly, it says that given a concatenation of geodesics in a hyperbolic space, for the concatenation to be a quasi-geodesic it suffices for the even geodesics to not overlap much with the two geodesics coming right after and the two geodesics coming right before.

\begin{lem}\label{lem:concat}\cite[Lemma 2.3]{HS:separable}
For every $\delta\geq 0$ there exists a constant $C_\delta$ such that the following holds. Let $(\alpha_i)_{i\in \mathbb Z}$ be geodesics in a $\delta$-hyperbolic geodesic space, of length bounded independently of $i$, with the terminal point of $\alpha_i$ being the initial point of $\alpha_{i+1}$. Let $$d_{i,j}=diam (N_{3\delta}(\alpha_i)\cap \alpha_j)$$ and assume that for all even $i$, 
$$\ell(\alpha_i)\geq d_{i,i+1}+d_{i,i+2}+d_{i,i-1}+d_{i,i-2}+C_\delta.$$
Then the concatenation of the $\alpha_i$ is a quasi-geodesic.
\end{lem}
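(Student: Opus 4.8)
The plan is to verify the defining quasi-geodesic inequality directly. The upper bound $d(x,y)\le d_\gamma(x,y)$, where $d_\gamma$ is the path metric induced on the concatenation $\gamma=\bigcup_i\alpha_i$, holds automatically because each $\alpha_i$ is geodesic, so only the lower bound $d(x,y)\ge\lambda^{-1}d_\gamma(x,y)-\varepsilon$ has to be established, for points $x,y$ on $\gamma$ and for constants $\lambda,\varepsilon$ depending only on $\delta$ and on the uniform bound $B$ for the lengths $\ell(\alpha_i)$. First I would collect the standard toolkit for a $\delta$-hyperbolic geodesic space: geodesic polygons are $O(\delta)$-thin; nearest-point projection onto a geodesic is coarsely Lipschitz and coarsely order-preserving; and the set of points of one geodesic lying within $3\delta$ of a second geodesic is, up to an $O(\delta)$-error, a sub-interval whose length is comparable to the corresponding $d_{i,j}$. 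I would also record the coarse symmetry $|d_{i,j}-d_{j,i}|\le 6\delta$ together with $\ell(\alpha_j)\ge d_{i,j}-6\delta$, so that after enlarging $C_\delta$ beyond a suitable multiple of $\delta$ the hypothesis forces each even $\alpha_i$ to be strictly longer than its $3\delta$-overlap with any single one of $\alpha_{i\pm1},\alpha_{i\pm2}$.

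The key object is the \emph{core} $\alpha_i^\circ\subseteq\alpha_i$ of an even geodesic: the sub-segment obtained by deleting from each end a length that absorbs the relevant overlaps, so that every point of $\alpha_i^\circ$ lies at distance $>3\delta$ from each of $\alpha_{i\pm1}$ and $\alpha_{i\pm2}$. The inequality $\ell(\alpha_i)\ge d_{i,i+1}+d_{i,i+2}+d_{i,i-1}+d_{i,i-2}+C_\delta$ makes $\ell(\alpha_i^\circ)\ge c_1$ for a constant $c_1=c_1(\delta)>0$ which tends to infinity with $C_\delta$; these cores are the portions of $\gamma$ that carry genuine progress. The heart of the argument is the claim that for $x\in\alpha_m$ and $y\in\alpha_n$ with $m\le n$, the geodesic $[x,y]$ contains pairwise-disjoint sub-segments, occurring in the natural order, each at Hausdorff distance $O(\delta)$ from $\alpha_i^\circ$ for every even $i$ with $m<i<n$. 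Granting the claim, $d(x,y)=\ell([x,y])\ge(c_1-O(\delta))\cdot N$ where $N=\#\{\text{even }i:m<i<n\}$, which (choosing $C_\delta$ so large that $c_1$ dominates the $O(\delta)$ error) is at least $\frac{c_1}{2}N$; meanwhile $d_\gamma(x,y)$ is bounded by a fixed multiple of $B(N+1)$. Combining the two estimates yields the lower bound with $\lambda$ of order $B/c_1$ and $\varepsilon$ of order $c_1$.

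I would prove the claim by induction on the number of geodesics lying between $x$ and $y$, appending one $\alpha_i$ at a time. The inductive statement has to be strengthened so as to carry, in addition to the fellow-travelling of the cores already encountered, some control on how $[x,y]$ enters and leaves the outermost one or two geodesics on each side --- for instance that the nearest-point projection of one endpoint onto each of the last couple of geodesics lands $O(\delta)$-close to the appropriate endpoint of that geodesic --- so that the effect of adding a further geodesic can be analysed with a single thin quadrilateral. What makes the induction close is precisely the presence of the $\pm 2$ terms in the hypothesis: the core $\alpha_{i-2}^\circ$ is by construction at distance $>3\delta$ from $\alpha_i$, and when $\alpha_i$ is even the overlap of $\alpha_i$ with $\alpha_{i-1}$ and $\alpha_{i-2}$ is itself bounded by $\alpha_i$'s hypothesis; hence a newly appended geodesic can only perturb the ``local'' picture near the current end of the concatenation, and can neither disturb nor be dragged back across a core that has already been counted. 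Backtracking is thereby confined to windows of at most five consecutive geodesics, each dominated by its even member. I expect the main obstacle to be the bookkeeping in this induction --- pinning down the correct strengthened statement and running the case analysis according to the parities of the last one or two geodesics, and according to whether or not the appended geodesic back-tracks into its neighbours --- rather than any individual hyperbolic-geometry estimate, each of which is routine.
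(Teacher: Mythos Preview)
The paper does not prove this lemma at all: it is quoted verbatim from \cite[Lemma~2.3]{HS:separable} and used as a black box, so there is no ``paper's own proof'' to compare against. Your sketch is a reasonable outline of how one would establish such a concatenation lemma from first principles in a $\delta$-hyperbolic space --- isolating a definite-length core in each even segment that is $3\delta$-separated from its four neighbours, and then arguing that any geodesic $[x,y]$ must fellow-travel these cores in order --- and the observation that the $\pm 2$ terms in the hypothesis are exactly what confines backtracking to a bounded window is the correct conceptual point. You are also right that the resulting quasi-geodesic constants depend on the uniform length bound $B$ as well as on $\delta$; this is harmless for the application in the paper, which only needs the concatenation to be unbounded. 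The one place I would flag is that your inductive hypothesis is stated only impressionistically (``some control on how $[x,y]$ enters and leaves the outermost one or two geodesics''); making this precise and checking it is stable under appending a segment of either parity is where all the work lies, and until that is written out one cannot declare the argument complete.
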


Fix a group $G$ acting acylindrically and nonelementarily on the hyperbolic space $X$, and a random walk $(x_n)$ driven by an admissible measure on $G$. For convenience, we can and will assume that $X$ is a Cayley graph of $G$, so that group elements are also points in $X$ 
(see Theorem 1.2 of \cite{Osin}). Fix a hyperbolicity constant $\delta$ for $X$.

First of all, we recall the fundamental result that $(x_n)$ makes linear progress in the $X$:

\begin{thm}\cite{MaherTiozzo}
\label{thm:lin_prog}
There exists $\lambda>0$ such that
$$\mathbb P[d_X(e,x_n)> \lambda n]\to 1.$$
\end{thm}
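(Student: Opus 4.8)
This is the linear progress theorem of Maher--Tiozzo \cite{MaherTiozzo}; the argument I would give follows theirs, and I want to emphasise the obstacle it has to overcome. By Osin's theorem \cite{Osin} we may take $X$ to be a Cayley graph of $G$ for a (possibly infinite) generating set $\mathcal{X} \supseteq S$, so $d_X \le d_S$ and the increments of $(x_n)$ have uniformly bounded $X$-displacement; in particular the walk has finite (indeed bounded) first moment in $X$. However $X$ is typically \emph{not} proper, so compactness of $X \cup \partial X$ and local finiteness are unavailable, and essentially all the work goes into replacing them.

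First I would reduce to positivity of the drift. With $\ell(n) = \mathbb{E}[d_X(e, x_n)]$, the triangle inequality and the i.i.d.\ increment structure give $\ell(n+m) \le \ell(n) + \ell(m)$, so $\ell := \lim_n \ell(n)/n = \inf_n \ell(n)/n$ exists. Applying Kingman's subadditive ergodic theorem to the cocycle $(m,n) \mapsto d_X(x_m, x_n)$ over the (ergodic) Bernoulli shift gives $d_X(e, x_n)/n \to \ell$ almost surely, hence in probability. It therefore suffices to prove $\ell > 0$: for any $0 < \lambda < \ell$ one then gets $\mathbb{P}[d_X(e, x_n) > \lambda n] \to 1$.

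Next I would set up the boundary theory. Since the action is acylindrical and non-elementary, there is a $\mu$-stationary probability measure $\nu$ on $\partial X$; building $\nu$ without properness of $X$ is one of the main technical points of \cite{MaherTiozzo}, the key being that a non-elementary action cannot have a bounded orbit. The measure-valued martingale $(x_n)_\ast\nu$ then converges a.s.\ to a point mass $\delta_{\xi(\omega)}$, giving a boundary map $\xi$ with $\xi_\ast\mathbb{P} = \nu$ and $x_n \to \xi$ in $X \cup \partial X$, and non-elementarity rules out finite $G$-orbits on $\partial X$, so $\nu$ is non-atomic by the usual dichotomy for stationary boundary measures. The substantive step is $\ell > 0$, and here I would run the persistence-of-progress argument: non-elementarity yields two loxodromics with uniformly transverse quasi-axes (non-atomicity of $\nu$ being what lets one genuinely exploit transversality), and a stopping-time/renewal argument shows that, conditionally on the past, with probability bounded away from $0$ a controlled-length block of the walk acts like a bounded power of such a loxodromic; via a local-to-global principle for quasi-geodesics in $\delta$-hyperbolic spaces in the spirit of Lemma~\ref{lem:concat}, these contributions to $d_X(e, x_n)$ do not cancel, and a Borel--Cantelli estimate over the (geometrically distributed) renewal times gives $d_X(e, x_n) \ge cn$ in expectation, i.e.\ $\ell > 0$. (A quicker heuristic: the forward and backward boundary limits of the bilateral walk are a.s.\ distinct, so the Gromov products $(\check{x}_n \cdot x_n)_e$ of the reflected walk $\check x_n$ stay controlled, and with the identity $2\ell(n) = \ell(2n) + 2\,\mathbb{E}[(\check{x}_n \cdot x_n)_e]$ this points to positive drift --- but making it rigorous needs a tail bound on the Gromov product, which is essentially the same difficulty.)

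The main obstacle, then, is the non-properness of $X$: existence of $\nu$, a.s.\ convergence to $\partial X$, and the control of shadows underlying the drift estimate all have to be carried out using only the bounded-moment hypothesis and non-elementarity, with no local finiteness. I note finally that the applications in this paper really call for the stronger ``linear progress with exponential decay'', namely $\mathbb{P}[d_X(e, x_n) \le \lambda n] \le K e^{-cn}$ for suitable $\lambda, c, K > 0$, which comes out of the same circle of ideas together with a large-deviation refinement of the renewal argument.
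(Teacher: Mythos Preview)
The paper does not give a proof of this theorem: it is stated with a citation to \cite{MaherTiozzo} and used as a black box. Your sketch is a reasonable outline of the Maher--Tiozzo argument and goes well beyond anything the paper itself does; there is nothing in the paper to compare it against.

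One correction, though: your closing remark that ``the applications in this paper really call for the stronger `linear progress with exponential decay'\,'' is not right. In the proof of Theorem~\ref{thm:random} the only place an exponential tail is used is the bound from Lemma~\ref{lem:gr_prod}, which is summed over the $O(n)$ constants appearing in $w$ to get~(\ref{DivergeConstsEqn}). The remaining events---$d_X(e,x_m)>\lambda m$ from Theorem~\ref{thm:lin_prog} and the two matching bounds from Lemma~\ref{lem:match}---each hold with probability tending to $1$, so the intersection of all four families of events has probability tending to $1$ and is in particular positive for large $n$. Only the convergence-in-probability statement of Theorem~\ref{thm:lin_prog} is invoked; exponential decay is not needed.
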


For any $g,h\in G$ fix a geodesic $[g,h]^X$ in $X$ connecting $g,h$ (seen as elements of $X$).

For $g,h\in G$ denote
$$\mathcal D(g,h)=diam\left(N_{3\delta}([e,g]^X)\cap [e,h]^X\right).$$

This is essentially a Gromov product, and the following lemma says that the Gromov product of a random element with any fixed element is small with high probability.

\begin{lem}
\label{lem:gr_prod}
    There exists a constant $C_1>0$ such that for all $g\in G$ and $t\geq 0$ and positive integer $n$ we have
    $$\mathbb P[\mathcal D(g,x_n)\geq t]\leq C_1 2^{-t/C_1},$$
    and similarly for $x_n^{-1}$.
\end{lem}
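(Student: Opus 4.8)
The plan is to reduce the quantity $\mathcal D(g,x_n)$ to an honest Gromov product based at $e$, and then to invoke the exponential decay of shadows for the random walk $(x_n)$. Since $\mathcal D(g,h)\leq d_X(e,g)+O(\delta)$ for all $h$, the asserted inequality is vacuous once $t>d_X(e,g)+O(\delta)$ (the probability is then $0$), so I may assume $t\leq d_X(e,g)$, and I may also assume $t$ is larger than a suitable multiple of $\delta$, the bound being trivial otherwise. Both $[e,g]^X$ and $[e,x_n]^X$ issue from $e$, and the one point here needing a short hyperbolic-geometry argument is that a long subsegment of $[e,x_n]^X$ lying inside $N_{3\delta}([e,g]^X)$ must in fact be an \emph{initial} subsegment, because points of $[e,g]^X$ and of $[e,x_n]^X$ that are $3\delta$-close necessarily have comparable distance from $e$. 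Using thinness of triangles one then obtains a universal constant $c_1$ such that $\mathcal D(g,x_n)\geq t$ forces the geodesic $[e,x_n]^X$ to pass within distance $R_0=O(\delta)$ of the point $p\in[e,g]^X$ at distance $t-c_1\delta$ from $e$. Writing $S_e(y,R)=\lbrace z\in X:\ d_X([e,z]^X,y)\leq R\rbrace$ for the shadow of $y$, it therefore suffices to prove that, uniformly in the integer $n\geq 1$, in $y\in X$ and in $R\geq 0$,
$$\mathbb P\big[x_n\in S_e(y,R)\big]\ \leq\ K\exp\!\big(-(d_X(e,y)-R)/K\big),$$
since applying this with $y=p$ and $R=R_0$ and absorbing the constant $c_1\delta$ yields the Lemma for $(x_n)$.

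This displayed estimate is precisely an exponential shadow (large-deviation) bound for random walks on nonelementary actions on hyperbolic spaces. The mechanism is standard: the driving measure is finitely supported and the action of $G$ on $X$ is nonelementary, so the harmonic (hitting) measure on the Gromov boundary assigns exponentially small mass to the shadow of a far point; then, via convergence to the boundary together with the linear progress of Theorem \ref{thm:lin_prog} and a maximal-inequality argument, the same exponential bound transfers to $\mathbb P[x_n\in S_e(y,R)]$ for every $n$, with a rate $K$ that does not depend on $y$; see \cite{MaherTiozzo}. (Acylindricity is not needed at this point, only the action being nonelementary and the measure finitely supported.)

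For the statement about $x_n^{-1}$ I would use the reflected walk. Writing $x_n=g_1\cdots g_n$ with the $g_i$ independent of common law $\mu$, exchangeability of the $g_i$ shows that $x_n^{-1}=g_n^{-1}\cdots g_1^{-1}$ has the same distribution as the $n$-th step of the random walk driven by the reflected measure $\check\mu$, where $\check\mu(h)=\mu(h^{-1})$. This $\check\mu$ is again finitely supported, the semigroup generated by its support is the set of inverses of products from $\mathrm{supp}(\mu)$, hence all of $G$, and the action of $G$ on $X$ is unchanged; so the argument above applies verbatim to $\check\mu$ and gives the same bound for $x_n^{-1}$. Choosing $C_1$ large enough to cover both cases completes the proof.

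The main obstacle is the uniformity \emph{in $g$} (equivalently, in the centre $y$ of the shadow) in the displayed estimate: for a \emph{fixed} boundary point it is easy to see that the Gromov product with $x_n$ is almost surely finite, but the useful content of the Lemma is the quantitative, $g$-independent exponential decay, and this is exactly where nonelementarity (concretely, the abundance of independent loxodromic directions) and the finite-support hypothesis enter, through the shadow lemma. By contrast the reduction in the first paragraph is routine hyperbolic geometry and the passage to $x_n^{-1}$ is a formal manipulation.
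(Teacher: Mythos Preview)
Your proposal is correct and follows essentially the same approach as the paper: reduce $\mathcal D(g,x_n)$ to a shadow/Gromov-product estimate and then quote the exponential decay of shadows for the random walk, handling $x_n^{-1}$ via the reflected measure. The paper's proof is simply a citation---specifically to \cite[Lemma 2.10]{Maher:exp}, with the remark (from \cite[Proposition 5.1]{MaherTiozzo}) that the argument goes through for nonelementary acylindrical actions---so your write-up is really an unpacking of what that citation contains; the only difference is that you point to \cite{MaherTiozzo} rather than to Maher's earlier paper for the shadow bound.
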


\begin{proof}
 For $x_n$, this follows from \cite[Lemma 2.10]{Maher:exp} (whose proof, as observed in \cite[Proposition 5.1]{MaherTiozzo}, applies to our setting). For $x_n^{-1}$ simply observe that the law of $x_n^{-1}$ is the law of the random walk driven by the reflected measure.
\end{proof}

We need one more lemma before proving Theorem \ref{thm:random}. Roughly, this says that a random geodesic does not overlap much with any of its translates.

\begin{lem}
\label{lem:match}
Suppose that $G$ has trivial finite radical. Then for all $\epsilon>0$ we have
    $$\mathbb P[\forall g:\ \mathcal D(x^{-1}_n,gx_n)\leq \epsilon n]\to 1$$
    and, similarly,
    $$\mathbb P[\forall g\neq e:\ \mathcal D(x_n,gx_n)\leq \epsilon n]\to 1.$$
\end{lem}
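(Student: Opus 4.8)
The plan is to prove both estimates by the same scheme, driven by Lemma~\ref{lem:gr_prod} and acylindricity, and I describe it for the first; the second will follow by rerunning the argument with $x_n$ in place of $x_n^{-1}$, the only genuinely new points being that $g=e$ must be dropped (since $\mathcal D(x_n,x_n)$ is all of $[e,x_n]^X$) and that the triviality of the finite radical is needed to dispose of the remaining ``almost-trivial'' $g$ — I return to this at the end. First I would deal with large $g$ deterministically: writing $K=\max_{s\in S}d_X(e,s)$, we have $d_X(e,x_n)\le Kn$ always, so the two geodesics entering the definition of $\mathcal D(x_n^{-1},gx_n)$ lie in $B_X(e,Kn)$ and in $B_X(g,Kn)$ respectively; their $3\delta$-neighbourhoods are then disjoint, and $\mathcal D(x_n^{-1},gx_n)=0$, as soon as $|g|>3Kn$. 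So it suffices to bound $\mathbb P[\exists\,g\in B_G(3Kn):\ \mathcal D(x_n^{-1},gx_n)\ge\epsilon n]$.

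For a single fixed $g\in B_G(3Kn)$ I would obtain exponential decay. A routine case analysis on where the overlap sits relative to the four endpoints $e$, $g$, $gx_n$, $x_n^{-1}$ reduces the event $\{\mathcal D(x_n^{-1},gx_n)\ge\epsilon n\}$, up to an event of probability $o(1)$ coming from Theorem~\ref{thm:lin_prog}, to the event that some Gromov product with a fixed argument ($e$ or $g$) is $\gtrsim\epsilon n$, which Lemma~\ref{lem:gr_prod} bounds by $C_1 2^{-c\epsilon n}$; the few configurations in which no fixed endpoint is involved are handled by splitting the walk into a bounded number of independent blocks $x_n=abc$ with $a,b,c$ of length $\asymp n/3$, using Lemma~\ref{lem:gr_prod} at the junctions to see that the broken lines $[e,a]^X*[a,ab]^X*[ab,abc]^X$ and its mirror for $x_n^{-1}=c^{-1}b^{-1}a^{-1}$ are uniform quasi-geodesics, and then applying Lemma~\ref{lem:gr_prod} (or its $x_n^{-1}$ version) to one fresh block conditionally on the others, which now only feed into the deterministic element translating it. This gives $\mathbb P[\mathcal D(x_n^{-1},gx_n)\ge\epsilon n]\le C'2^{-c'\epsilon n}$ with $C',c'>0$ depending only on $G$ and the measure, uniformly in $g$ and $n$.

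The hard part is passing from one $g$ to all $g$ simultaneously, since $B_G(3Kn)$ is exponentially large while the per-$g$ bound only decays at rate $c'\epsilon$, so a direct union bound loses; this is exactly where acylindricity is essential. The point I would exploit is that the bad events are strongly correlated: if $g_1,g_2\in B_G(3Kn)$ both have $\mathcal D(x_n^{-1},g_jx_n)\ge\epsilon n$, then after the standard translation each $g_j$ carries a sub-segment of $[e,x_n]^X$ of length $\gtrsim\epsilon n$, isometrically up to $O(\delta)$, onto a sub-segment of $[e,x_n^{-1}]^X$. Tiling $[e,x_n]^X$ and $[e,x_n^{-1}]^X$ by $O(n)$ sub-segments of a fixed length $R_0$ large enough for the acylindricity estimate at displacement $O(\delta)$, each bad $g$ acquires a label — a source tile, a target tile, an orientation — one of $O(n^2)$ possibilities, and two bad $g$ with the same label differ by an element moving a length-$R_0$ geodesic a bounded amount, so acylindricity bounds by $N_0$ the number of bad $g$ per label. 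Thus at most $O(N_0n^2)$ of the $g$ are bad; exposing them first and union-bounding over them by the previous step gives total failure probability $\le O(N_0n^2)\cdot C'2^{-c'\epsilon n}\to 0$.

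For the second estimate, rerunning the above with $x_n$ in place of $x_n^{-1}$ meets one extra family of bad $g$: those carrying their witnessing sub-segment of $[e,x_n]^X$ onto a parallel sub-segment with zero shift, i.e.\ coarsely fixing a long piece of $[e,x_n]^X$ pointwise. Acylindricity again caps these at $N_0$, and since the relevant pieces grow with $n$ I would invoke the triviality of the finite radical — together with the standard fact that $x_n$ is, with probability $1-o(1)$, a loxodromic whose stabiliser at infinity meets the finite radical trivially — to conclude that no non-trivial element fixes arbitrarily long pieces of its axis, so this family is just $\{e\}$ for $n$ large. As $g=e$ is excluded by hypothesis, the second estimate follows; the main obstacle, as above, remains the uniformity over the infinite group $G$ in the third paragraph.
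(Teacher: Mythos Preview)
The paper does not prove this from scratch; it simply cites \cite[Proposition~11.8]{MT:Cremona} (stated for axes of $x_n$ and a conjugate, but noted to reduce in its first paragraph to two translates of a geodesic to the random element) combined with \cite[Proposition~5.1]{MS:rnd_subgp}. Your attempt at a direct argument has the right ingredients --- acylindricity to cap the number of candidate translations, block-decomposition of the walk to manufacture independence --- but the logical structure in your third paragraph has a genuine gap.

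You establish (A) for each \emph{fixed} $g$, $\mathbb P[g\ \text{bad}]\le C'2^{-c'\epsilon n}$, and (B) for each \emph{outcome} of $x_n$, at most $O(N_0n^2)$ elements $g$ are bad. You then ``expose them first and union-bound over them by the previous step''. But the bad $g$'s are random --- they are functions of $x_n$ --- so the fixed-$g$ estimate (A) does not apply to them, and (A)$+$(B) together do \emph{not} imply $\mathbb P[\exists g\ \text{bad}]\to 0$. (Toy counterexample: if for each outcome exactly one $g=g(x_n)$ is bad, then (B) holds with bound $1$ and (A) can hold with arbitrarily small per-$g$ probability once the range of $g(\cdot)$ is large, yet some $g$ is always bad.) A correct argument in your spirit would not separate steps 2 and 3: one fixes a \emph{deterministic} label $(i,j)$, conditions on the portion of the walk that pins down the $\le N_0$ candidate $g$'s with that label, and then spends the remaining independent increments, via Lemma~\ref{lem:gr_prod}, to show the match cannot extend to length $\epsilon n$; a genuine union bound over the $O(n^2)$ deterministic labels then finishes. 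This is essentially the mechanism behind the cited results, and the case where the source and target time-intervals of the walk overlap requires care beyond your sketch. A smaller point: your first-paragraph reduction to $g\in B_G(3Kn)$ only yields $d_X(e,g)\le O(n)$, which for non-locally-finite $X$ does not reduce to finitely many $g$; fortunately your tiling argument does not actually need this step.
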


\begin{proof}
This is essentially \cite[Proposition 11.8]{MT:Cremona} combined with \cite[Proposition 5.1]{MS:rnd_subgp}. The former is stated in terms of the axes of $x_n$ and a conjugate, but the first paragraph of the proof actually reduces to two translates of a geodesic to the random element.
\end{proof}

\subsection{Proof of Theorem \ref{thm:random}}

Fix $\lambda$ as in Theorem \ref{thm:lin_prog} and $C_1$ as in Lemma \ref{lem:gr_prod}. Set $C=20C_1/\lambda$.

Let $w(x) \in G \ast \langle x \rangle$ be of word length $n>1$, which we can take to be cyclically reduced and we can assume $n$ to be sufficiently large. Let $\{g_i\}_{i=1,\dots,k}$ be the collection of all elements of $G$ appearing in $w(x)$ together with their inverses and the identity, and notice $k\leq 2n$. Set $m=\lceil  C\log_2 n\rceil$.

By Lemma \ref{lem:gr_prod} we have
\begin{equation} \label{DivergeConstsEqn}
    \mathbb P[\exists g_i:\ \mathcal D(g_i,x_m)\geq \lambda m/10]\leq \sum_{i=1}^k C_1 2^{-C\lambda \log_2(n)/(10C_1)}\leq 2nC_1 n^{-2}=2C_1/n.
\end{equation}
Hence, and in view of Theorem \ref{thm:lin_prog} and Lemma \ref{lem:match}, for sufficiently large $n$ we have that with positive probability all of the following hold:
\begin{itemize}
 \item $\mathcal D(g_i,x_m)\leq \lambda m/10$ for all $i$ (by (\ref{DivergeConstsEqn})),
 \item $d_X(e,x_m)> \lambda m$,
 \item $\mathcal D(x^{-1}_m,g_ix_m)\leq \lambda m/10$ for all $i$,
 \item $\mathcal D(x_m,gx_m)\leq \lambda m/10$ for all $i$ with $g_i\neq e$.
\end{itemize}

We can now form a bi-infinite concatenation of geodesics in $X$, by concatenating translates of geodesics of the form $[1,g_i]^X$ and $[1,x^{\pm 1}_m]^X$, according to a bi-infinite sequence repeating $w(x)$. For convenience, between any two translates of $[1,x^{\pm 1}_m]^X$ we artificially insert a trivial geodesic in the concatenation, and label the various geodesics in a way that the translates of $[1,x^{\pm 1}_m]^X$ are the even ones. Then this concatenation satisfies the conditions of Lemma \ref{lem:concat}, provided that $n$ is large enough, and therefore the bi-infinite sequence $\big(w(x_n)^j \big)_{j \in \mathbb{Z}}$ follows a quasi-geodesic in
$X$. In particular $w(x_n)$ has infinite order in $G$, so is nontrivial, as required.\qed

\subsection{Proof of Theorem \ref{thm:linear_MIF}} The proof is very similar. Again we $\lambda$ as in Theorem \ref{thm:lin_prog} and $C_1$ as in Lemma \ref{lem:gr_prod}. We also let $s$ be twice the cardinality of the generating set of $G$ giving the word metric under consideration. Set $C=20C_1\log_2(s)/\lambda$. 

Given an integer $n\geq 1$, we now let $\{g_i\}_{i=1,\dots,k}$ be the collection of all non-trivial elements in the ball of radius $n$ in $G$, and we note $k\leq s^n$. Setting $m=Cn$, by Lemma \ref{lem:gr_prod} we have

$$\mathbb P[\exists g_i:\ \mathcal D(g_i,x_{m})\geq \lambda m/10]\leq \sum_{i=1}^k C_1 2^{-\lambda C n/(10C_1)}\leq C_1 s^n s^{-2n}=C_1s^{-n}.$$

Given this, for $n$ sufficiently large we have that with positive probability direct analogues of the four bullet points in the proof of Theorem \ref{thm:random} hold for $x_m$ and any non-trivial $w(x)\in G\ast \mathbb Z$. The same argument then shows that with positive probability we have $w(x_m)\neq e_G$ for all such $w(x)$, as required.\qed

\section{Selfless-ness}
\label{sec:self}

In this section we discuss selfless-ness, as introduced in \cite{AmGaElPa}, which is a way of quantifying being MIF.

\begin{defn}
\label{defn:selfless}
For $f : \mathbb{N} \rightarrow \mathbb{R}$
a function satisfying:
\begin{equation} \label{SelflessLimit}
\liminf_n f(n)^{\frac{1}{n}} = 1
\end{equation}
an infinite group $G$ generated by a finite set $S$
is \emph{$f$-selfless} if, for all $n \in \mathbb{N}$,
there exists a homomorphism
$\phi_n : G \ast \langle x \rangle \rightarrow G$ such that:
\begin{itemize}
\item[(i)] $\phi_n |_G = \id_G$;

\item[(ii)] $\phi_n |_{B_{S \cup \lbrace x \rbrace}(n)}$ is injective;

\item[(iii)] $\phi_n (B_{S \cup \lbrace x \rbrace}(n))
\subseteq B_S (f(n))$.

\end{itemize}
The group $G$ is \emph{selfless} if it is
$f$-selfless for some function $f$ as above.
\end{defn}

\begin{rmrk}
\label{rmrk:selfless}
\begin{itemize}
\item[(i)] Every selfless group is MIF. Moreover, 
if $G$ is $f$-selfless, 
then $\mathcal{M}_{G} ^S (n)\leq f(n)$ for all $n\in\mathbb{N}$. 

\item[(ii)] Since for all $n \geq 1$, 
\begin{equation*}
\lvert B_S (n-1) \rvert \lneq \lvert B_S (n) \rvert = \lvert \phi_n \big(B_S (n)\big) \rvert
\leq \lvert B_S \big(f(n)\big) \rvert , 
\end{equation*}
if $G$ is $f$-selfless, then $f(n) \geq n$ 
for all $n \in \mathbb{N}$. 
In particular the inequality $\mathcal{M}_G ^S (n)\leq f(n)$ 
above fails to be sharp for any sharply MIF group. 

\item[(iii)] Suppose for each $n \geq 1$ we are given $g_n \in G$ satisfying 
$\liminf_n \lvert g_n \rvert^{\frac{1}{n}} = 1$ and $w(g_n) \neq e_G$ 
for all $w(x) \in B_{S \cup \lbrace x \rbrace}(n)$. 
Then setting $\phi_n (x) = g_{2n}$, the conditions of Definition \ref{defn:selfless} 
are satisfied, with $f(n) = n \lvert g_{2n} \rvert$. 
\end{itemize}
\end{rmrk}

Although it is clear that every finitely generated MIF group has finite 
MIF growth, to our knowledge, the following is open. 

\begin{qu}
Is every finitely generated MIF group selfless? 
\end{qu}

It is proved in \cite[Theorem 3.3]{AmGaElPa} that acylindrically hyperbolic groups with trivial finite radical are selfless. The proof in fact yields that every such $G$ 
satisfies the conclusion of Corollary \ref{coroll:linear_MIF}, 
but with the weaker bound $\lvert g_n \rvert = O(n^{12})$. 
Hence, by Remark \ref{rmrk:selfless} (iii), 
\cite[Theorem 3.3]{AmGaElPa} yields that every such $G$ 
is $f$-selfless, for some $f(n)=O(n^{13})$, 
and our Corollary \ref{coroll:linear_MIF}, or being sharply MIF, gives $f(n)=O(n^{2})$:

\begin{coroll}
    Let $G$ be a finitely generated acylindrically hyperbolic group 
with trivial finite radical. Then $G$ is $f$-selfless for some $f=O(n^2)$.
\end{coroll}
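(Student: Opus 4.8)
The plan is to combine Corollary \ref{coroll:linear_MIF} with Remark \ref{rmrk:selfless} (iii), which packages exactly the deduction we want. By Corollary \ref{coroll:linear_MIF}, for each integer $n \geq 1$ there is an element $g_n \in G$ with $\lvert g_n \rvert = O(n)$ such that $w(g_n) \neq e_G$ for every $w(x) \in W_n$. Since $W_n$ contains in particular every non-trivial $w(x) \in G \ast \langle x \rangle$ of word length at most $n$ with respect to $S \cup \lbrace x \rbrace$ (any such element has all its constants of word length at most $n$), the family $(g_n)_n$ satisfies the hypotheses of Remark \ref{rmrk:selfless} (iii): the condition $\liminf_n \lvert g_n \rvert^{1/n} = 1$ is immediate because $\lvert g_n \rvert$ grows only polynomially, so $\lvert g_n \rvert^{1/n} \to 1$, and a fortiori the liminf equals $1$.

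From here I would simply invoke Remark \ref{rmrk:selfless} (iii) directly: setting $\phi_n(x) = g_{2n}$ defines a homomorphism $\phi_n : G \ast \langle x \rangle \to G$ restricting to the identity on $G$, injective on $B_{S \cup \lbrace x \rbrace}(n)$ (because $g_{2n}$ is a non-solution to every mixed equation of length up to $2n \geq n$, hence distinct reduced words of length $\leq n$ have distinct images — one applies non-triviality to the "difference" word), and with image contained in $B_S(f(n))$ for $f(n) = n \lvert g_{2n} \rvert$. Since $\lvert g_{2n} \rvert = O(n)$, we get $f(n) = O(n^2)$, and $f$ trivially satisfies \eqref{SelflessLimit}. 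This establishes that $G$ is $f$-selfless for some $f = O(n^2)$.

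I do not expect any genuine obstacle here: the statement is a bookkeeping corollary of results already in hand, and the only point requiring the mildest care is matching the quantifiers — specifically checking that "non-solution to all of $W_n$" implies the injectivity clause (ii) of Definition \ref{defn:selfless}, which is precisely the content of Remark \ref{rmrk:selfless} (iii) and so may be cited rather than reproved. One could alternatively observe that being sharply MIF already gives $\mathcal{M}_G^S(n) = O(\log n)$, but that alone does not bound $f$; it is the linear simultaneous bound of Corollary \ref{coroll:linear_MIF} that is needed to control $\lvert g_{2n} \rvert$, and hence $f$, polynomially. Thus the proof is a two-line citation, and I would write it as such.

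\begin{proof}
By Corollary \ref{coroll:linear_MIF}, for each $n \geq 1$ there exists $g_n \in G$ with $\lvert g_n \rvert = O(n)$ and $w(g_n) \neq e_G$ for all $w(x) \in W_n$; in particular this holds for all non-trivial $w(x) \in B_{S \cup \lbrace x \rbrace}(n)$. Since $\lvert g_n \rvert$ grows polynomially, $\liminf_n \lvert g_n \rvert^{1/n} = 1$. The claim now follows from Remark \ref{rmrk:selfless} (iii), with $f(n) = n \lvert g_{2n} \rvert = O(n^2)$.
\end{proof}
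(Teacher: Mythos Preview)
Your proof is correct and matches the paper's argument: the corollary is stated in the paper immediately after the sentence explaining that Corollary \ref{coroll:linear_MIF} together with Remark \ref{rmrk:selfless} (iii) yields $f(n)=O(n^2)$, which is exactly the deduction you make. Your write-up simply spells out the verification of the hypotheses of Remark \ref{rmrk:selfless} (iii) a bit more explicitly than the paper does.
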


However, we do not know the answer to the following:

\begin{qu}
    Are there groups which are $f$-selfless for some $f=o(n^2)$?
\end{qu}

Note that the image of $x$ under $\phi_n$ as in Definition \ref{defn:selfless} needs to have length at least $O(n)$, but $\phi_n(x^n)$ could then have word length much shorter than $n|\phi_n(x)|$. This prevents us from concluding that the selfless-ness function cannot be $o(n^2)$, and therefore we do not know whether our estimate of $O(n^2)$ is optimal for all acylindrically hyperbolic groups.










\bibliographystyle{alpha}
\bibliography{biblio}

%
%
%
%

\end{document}